\newtheorem{theorem}{Theorem}
\newtheorem{proposition}[theorem]{Proposition}
\newtheorem{lemma}[theorem]{Lemma}
\newtheorem{example}[theorem]{Example}
\newtheorem{remark}[theorem]{Remark}
\newtheorem{definition}[theorem]{Definition}
\def\CaH{\mathcal{H}}
\def\CaC{\mathcal{C}}
\def\CaS{\mathcal{S}}
\def\N{\mathbb{N}}
\def\msg{\operatorname{msg}}
\def\g{\operatorname{g}}
\def\m{\operatorname{m}}
\def\n{\operatorname{n}}
\def\opN{\operatorname{N}}
\def\Fb{\operatorname{Fb}}
\def\SG{\operatorname{SG}}
\def\Ap{\operatorname{Ap}}
\title{On some affine semigroups characterized by a finite-state automata}
\date{}
\author{J. I. Farrán, J. C. Rosales, R. Tapia-Ramos, and A. Vigneron-Tenorio}
\begin{document}
\maketitle
\abstract{
This work introduces a new kind of affine semigroups called $P$-semigroups. Within the framework of $\CaC$-semigroups, we define a finite-state automaton associated to them. Moreover, this automaton determines whether a $\CaC$-semigroup is a $P$-semigroup, which represents a bridge between affine semigroups and Discrete Mathematics. Furthermore, some algorithms for computing all the $P$-semigroups with a fixed Frobenius element, genus, or multiplicity are provided.

{\small
{\it Key words:} affine semigroup, automata, $\CaC$-semigroup, Frobenius element, genus, rooted tree.

{2020 \it Mathematics Subject Classification:} 20M14, 20M35, 05C05.}
}

\section*{Introduction}

Let $\N$, $\mathbb{Q}$, and $\mathbb{R}$ be the sets of non-negative integers, rationals, and real numbers, respectively.
For any $d\in \N$, let $\CaC\subseteq \N^d$ be a non-negative integer cone finitely generated, and assume that it has at least $d$ extremal rays.
In general, a monoid $S\subseteq \N^d$ is a semigroup (that is, a non-empty set closed under the usual addition in $\N^d$) containing the zero element. A finitely generated monoid is called an affine semigroup.
If the monoid $S$ is a subset of $\CaC$, and $\CaC\setminus S$ is finite, then $S$ is finitely generated and is called a $\CaC$-semigroup. This class of semigroups naturally generalizes numerical semigroups and, in particular, includes the class of generalized numerical semigroups first introduced in \cite{GenSemNp}.
Hence, concepts such as the set of gaps of $S$ ($\CaC\setminus S$), and its genus (cardinality of $\CaC\setminus S$) are defined as extensions of the corresponding ones for numerical semigroups (\cite{libroRosales}).
Moreover, once a total order on $\N^d$ is given, the Frobenius element of $S$, and the multiplicity of $S$, are defined as the maximum element in $\CaC\setminus S$ and as the minimum element in $S\setminus\{0\}$, respectively. In this work, a total order $\preceq$ in $\N ^d$ is fixed. A more detailed discussion on $\CaC$-semigroups is provided in \cite{Csemigroup}.

Given a non-empty finite set $P\subset \N^d$, this work introduces a family of affine semigroups denoted by $P$-semigroups. We say that an affine semigroup is a $P$-semigroup if $\left(\{s\}+P\right)\cap S\ne \emptyset$, for all $s\in S\setminus\{0\}$. Trivially, when $0\in P$, any affine semigroup in $\N^d$ is a $P$-semigroup. So, assume that the zero element does not belong to $P$.
In this setting, we can associate to each $\CaC$-semigroup a certain automaton. In particular,
we characterize when a $\CaC$-semigroup is a $P$-semigroup in terms of the language that is recognized by this automaton (Theorem \ref{MainTheoremAutomata}). This result provides an unexpected link between Semigroup Theory and Automata theory.

We recall that (deterministic) finite state automata, as a computation model in Computer Science, are abstract machines having a finite number of states so that, starting at an initial state, the current state changes for a given Input following a transition function. In this way, a string of Inputs is recognized by the automaton if the final state is considered valid by this model, and the set of recognized strings is just the (formal) language recognized by the machine (see \cite{Meduna} for further details).

In this work, in addition to introducing the concept of $P$-semigroups, we also study several properties of $P$-semigroups. The obtained results allow us to design and implement some algorithmic methods to compute all the $P$-semigroups with a given Frobenius element, genus, or multiplicity for a fixed integer cone, a set $P$, and a monomial order.

The content of this work is organized as follows. Section \ref{Sec1} provides the necessary background on affine semigroups and includes a characterization of $P$-semigroups from the minimal generating set of an affine semigroup.
In Section \ref{Sec2}, we recall basic notions of automata theory and show how a specifically defined automaton serves to detect whether a $\CaC$-semigroup satisfies the $P$-semigroup condition.
Sections \ref{Sec3}, \ref{Sec4}, and \ref{Sec5} present several results on $P$-semigroups, together with some algorithms to compute all the $P$-semigroups satisfying some prescribed properties.

\section{Preliminaries and affine $P$-semigroups}\label{Sec1}

A real cone in $\mathbb{R}^d$ is the intersection of finitely many linear closed half-spaces. This set can also be defined from a set of vectors in $\mathbb{R}^d$, that is, a real cone is the set $\{\sum_{i=1}^n a_iv_i \mid a_i\in \mathbb{R}_+\}$ where $\{v_1,\ldots ,v_n\}\subset \mathbb{R}^d$ ($\mathbb{R}_+$ corresponds with the set of non-negative real numbers). We consider that an integer non-negative cone $\CaC\subseteq \N^d$ is the affine monoid given by $\{\sum_{i=1}^n a_iv_i \mid a_i\in \mathbb{R}_+\}\cap \N^d$ where $\{v_1,\ldots ,v_n\}\subset \mathbb{Q_+}^d$.
Since $\{v_1,\ldots ,v_n\}\subset \mathbb{Q_+}^d$, the integer cone $\CaC$ is finitely generated (see \cite{Bruns}). Hence, any $\CaC$-semigroup is also a finitely generated semigroup. Recall that, in this work, we also assume that the cone $\CaC$ has at least $d$ extreme rays.

It is well known that any $\CaC$-semigroup $S$ admits a unique minimal generating set, denoted by $\msg(S)$. Any element belonging to $\CaC\setminus S$ is called a gap of $S$, and the set of all gaps of $S$ is denoted by $\CaH(S)$. An important invariant related to $S$ is its genus, which is defined as $\g(S)=\sharp \big(\CaH(S) \big)$, where $\sharp(L)$ is the cardinality of any set $L$.
A gap $x$ of $S$ is called special gap of $S$ if $2x \in S$ and $x + s \in S$, for all $s \in S \setminus \{0\}$. We denote by $\SG(S)$ the set of all special gaps of $S$.

In this work, we fix a monomial order $\preceq$ on $\mathbb N^d$, that is, a total order compatible with addition, where $0 \preceq x$ for any $x\in \mathbb N^d$ (see \cite{Cox}). With respect to this fixed order $\preceq$ on $\N^d$, the Frobenius element of $S$ is $\Fb(S)=\max_\preceq \CaH(S)$. When $\CaH(S)$ is empty, $\Fb(S)=(-1,-1,\dots ,-1)\in \mathbb{Z}^d$. The multiplicity of $S$ is the element $\m(S)=\min_\preceq \big(S\setminus\{0\}\big)$, or equivalently, $\m(S)=\min_\preceq \big(\msg(S)\big)$. We say that an element $s$ of $S$ is a small element when $s\prec \Fb(S)$. The set of all small elements of $S$ is denoted by $\opN(S)$.
Obviously, $\Fb(S)$, $\g(S)$ and $\opN(S)$ depend on the fixed order.

Let $P$ be a non-empty subset of $\N^d$. Recall that a $P$-semigroup is an affine semigroup satisfying $\left(\{s\}+P\right)\cap S\ne \emptyset$, for all $s\in S\setminus\{0\}$. The set of all $P$-semigroups is denoted by $\CaS(P)$.

The following result provides a characterization of $P$-semigroups, which can be used to check computationally whether an affine semigroup is a $P$-semigroup.

\begin{proposition}
Let $S$ be an affine semigroup. Then, $S$ is a $P$-semigroup if and only if  $\left(\{a\}+P\right)\cap S\ne \emptyset$, for all $a\in \msg(S)$.
\end{proposition}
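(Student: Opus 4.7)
The statement to prove is an equivalence. The forward direction is immediate from the definition: if $(\{s\}+P)\cap S\ne\emptyset$ for all $s\in S\setminus\{0\}$, then in particular it holds for $s=a$ with $a\in\msg(S)\subseteq S\setminus\{0\}$.

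For the reverse direction, the plan is to use the minimal generating set to reduce an arbitrary nonzero element of $S$ to a generator plus something in $S$. Assume $(\{a\}+P)\cap S\ne\emptyset$ for every $a\in\msg(S)$, and take any $s\in S\setminus\{0\}$. Since $\msg(S)$ generates $S$ as a monoid and $s\ne 0$, we can write $s=a+s'$ for some $a\in\msg(S)$ and some $s'\in S$ (possibly $s'=0$). By hypothesis there exists $p\in P$ with $a+p\in S$. Then
\[
s+p = (a+s')+p = (a+p)+s' \in S,
\]
because $S$ is closed under addition and both $a+p$ and $s'$ lie in $S$. Hence $s+p\in(\{s\}+P)\cap S$, which is therefore nonempty.

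There is no real obstacle here; the only point to take care of is the decomposition $s=a+s'$ with $a\in\msg(S)$, which is guaranteed by the fact that $\msg(S)$ generates $S$ and $s\ne 0$. The argument is entirely elementary and relies only on the semigroup closure property and the definition of a minimal generating set.
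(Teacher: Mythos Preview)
Your proof is correct and follows essentially the same approach as the paper's: both note that the forward direction is immediate, and for the converse decompose an arbitrary nonzero $s$ as $a+s'$ with $a\in\msg(S)$ and $s'\in S$, then use the hypothesis to find $p\in P$ with $a+p\in S$ and conclude $s+p=(a+p)+s'\in S$.
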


\begin{proof}
The direct implication is immediate. Conversely, let $s$ be a non-zero element of $S$. Then, there exist $a \in \msg(S)$ and $s' \in S$ such that $s = a + s'$. By hypothesis, there exists $p \in P$ such that $a+p \in S$. It follows that $s+p = (a+s') + p \in S$. Hence, $S$ is a $P$-semigroup.
\end{proof}

\begin{example}\label{ej:P-smgp}
Consider the non-negative integer cone $\CaC$ spanned by
$$\{(1,1), (1,2), (1,3), (2,1)\},$$
and let $P=\{(1,2), (2,0)\}$. Define the $\CaC$-semigroup $S$ minimally generated by
\begin{multline*}
 \msg(S)=   \{(1, 1), (2, 3), (3, 8), (3, 9), (4, 3), (4, 7), (4, 8), (4, 11), (4, 12),\\ (5, 14), (5, 15), (6, 4), (7, 4), (8, 4), (10, 5), (12, 6), (14, 7)\},
\end{multline*}
whose set of gaps is
\begin{multline*}
    \CaH(S)=\{(1, 2), (2, 1), (1, 3), (3, 2), (2, 4), (4, 2), (2, 5), (2, 6), (3, 5), (5, 3),\\ (3, 6), (5, 4), (6, 3), (3, 7)\}.
\end{multline*}
Using Figure \ref{fig:Psemig}, it can be checked graphically that $S$ is a $P$-semigroup. The empty circles correspond to its set of gaps, the blue squares are its minimal generators, and the red circles are elements belonging to it.
    \begin{figure}[ht]
    \centering
    \includegraphics[scale=.27]{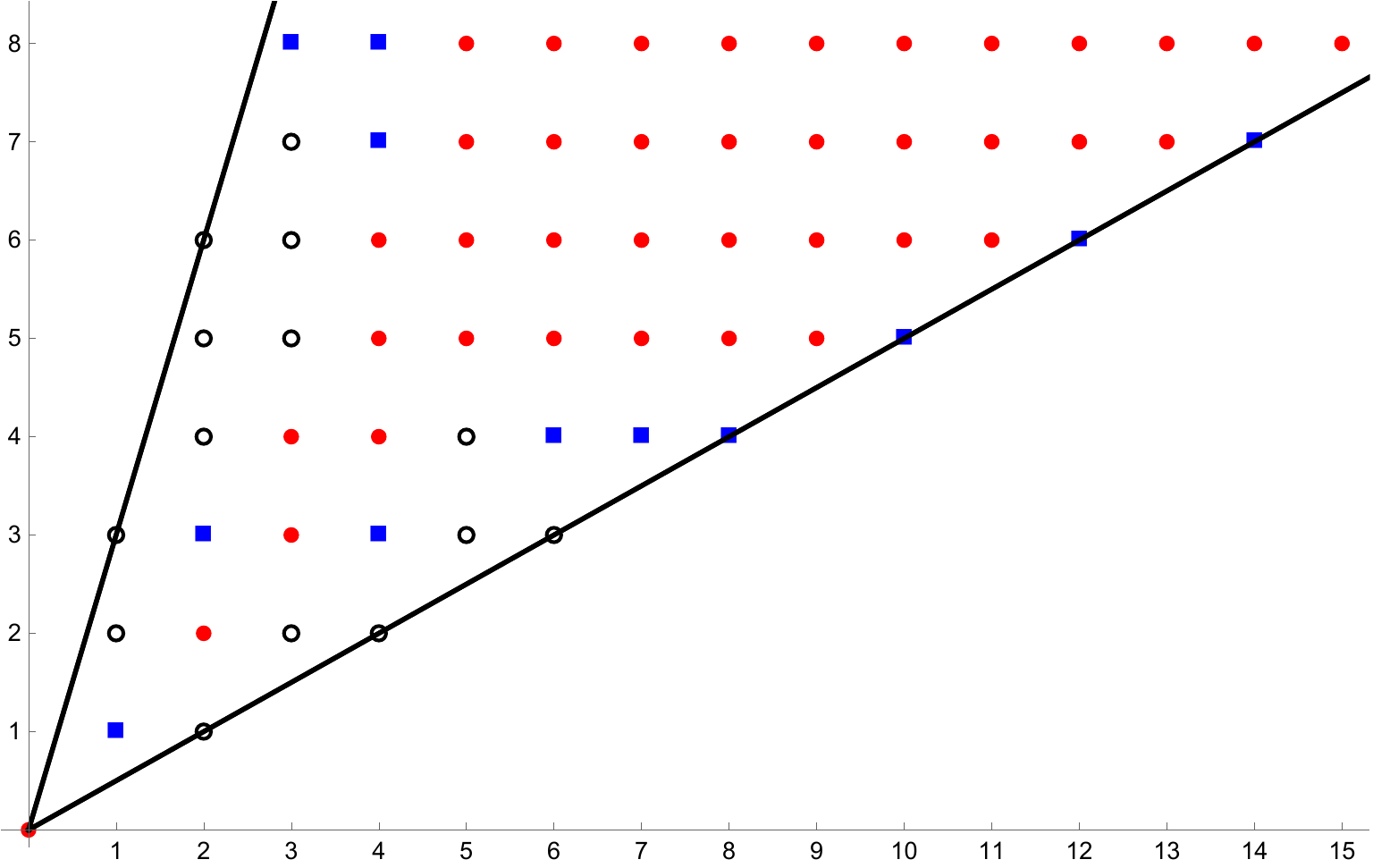}
    \caption{A $P$-semigroup}
    \label{fig:Psemig}
\end{figure}

\end{example}

\section{Automata and $P$-semigroups}\label{Sec2}

Recall that the notion of being a $P$-semigroup is defined for affine semigroups. From now on, we focus on $\CaC$-semigroups.
In this section, we introduce the basic notions of automata theory and define the automaton associated to a $\CaC$-semigroup. More details on automata and formal languages can be seen in \cite{Meduna}.

\begin{definition}
A (deterministic) finite-state machine (automaton in short) is a tuple $M=(\Sigma, {\cal A}, \sigma_{0}, F, f)$ where:
\begin{itemize}
\item $\Sigma$ is a finite set of states.
\item ${\cal A}$ is a finite alphabet of symbols.
\item $\sigma_{0}$ is the initial state.
\item $F\subseteq\Sigma$ is the set of accepted final states.
\item A transition function $f:\Sigma\times{\cal A}\rightarrow\Sigma$.
\end{itemize}
\end{definition}

By concatenation of symbols we get the set ${\cal A}^{\ast}$ of strings, and by iterating the transition $f$, one can extend $f$ on strings
\[
f^{\ast}:\Sigma\times{\cal A}^{\ast}\rightarrow\Sigma.
\]
In this way, a string $s\in{\cal A}^{\ast}$ is said to be recognized by the automaton $M$ if $f^{\ast}(\sigma_{0},s)\in F$. Otherwise, $s$ is rejected by $M$.

The set $L(M)\subseteq{\cal A}^{\ast}$ of all the strings (or words) recognized by the automaton $M$ is called the (formal) language recognized by $M$.

By convention, if $\varepsilon$ is the empty string, we set $f^{\ast}(\sigma_{0},\varepsilon)=\sigma_{0}$, so that $\varepsilon$ is accepted by the automaton if and only if $\sigma_{0}\in F$.

On the other hand, $\sigma\in \Sigma$ is said to be a dead state if $f(\sigma,\alpha)=\sigma$ for every $\alpha\in{\cal A}$.

Finally, an automaton is said to be connected if every state in $\Sigma$ can be reached from the initial state $\sigma_{0}$ for a particular string $s\in{\cal A}^{\ast}$.

Now we define the automaton associated to a $\CaC$-semigroup. Let $P\subset\N^{d}$ and $S$ be a $\CaC$-semigroup.
To define the automaton associated to $S$, it is necessary to assume finiteness of the set $\opN(S)$. From this fact, the fixed total order $\preceq$ have to satisfy that $\{m\in \CaC \mid m\prec f \}$ is finite, for any $f\in \CaC$. For instance, a graded monomial order is a total order of this kind (see \cite{Cox}).
The underlying idea is that, starting from any element of $S$ and moving only along the directions in $P$, we remain inside $S$. This observation inspires the definition of the following automaton. Denote by $G$ the set of minimal generators of $S$ that belong to $\opN(S)$, and let $Q_p$ be the set $\{s\in \opN(S)\setminus\{0\} \mid s-p\notin S, \text{ for all } p\in P\}$. This last concept recalls the notion given in \cite{R-G-U-Cohen-Macauly}. The automaton associated to $S$ works as follows.

\begin{definition}
Let $S$ be a $\CaC$-semigroup. The automaton associated to $S$ is the tuple $M(S)=(\Sigma, {\cal A}, \sigma_{0}, F, f)$ where:
\begin{itemize}
\item $\Sigma=\opN(S)\cup\{\kappa,\chi\}$, where $\kappa$ represents any element of $\CaC$ strictly greater than $\Fb(S)$, and $\chi$ denotes a dead state.
\item ${\cal A}=P\cup G\cup Q_{P}$ (note that this union may not be disjoint).
\item $\sigma_{0}=0\in S$.
\item $F=\Sigma\setminus\{\sigma_{0},\chi\}$.
\item The transition function $f$ defined as follows:
\[
f(0,\alpha)=\left\{
\begin{array}{ll}
\alpha & \mbox{if $\alpha\in \opN(S)\setminus\{0\}$}\\
\kappa & \mbox{else if $\alpha\in S$}\\
\chi & \mbox{otherwise}
\end{array}
\right.
\]
For $q\in \opN(S)\setminus\{0\}$, then
\[
f(q,\alpha)=\left\{
\begin{array}{ll}
q+\alpha & \mbox{if $\alpha\in P$ and $q+\alpha\in \opN(S)$}\\
\kappa & \mbox{else if $\alpha\in P$ and $q+\alpha\in S$}\\
\chi & \mbox{otherwise}
\end{array}
\right.
\]
Finally,
\[
f(\kappa,\alpha)=\left\{
\begin{array}{ll}
\kappa & \mbox{if $\alpha\in P$}\\
\chi & \mbox{otherwise}
\end{array}
\right.
\]
and $f(\chi,\alpha)=\chi$ for all $\alpha\in{\cal A}$.
\end{itemize}
\end{definition}

\begin{remark}
Notice that the set $Q_{P}$ is necessary, since otherwise the automaton may not be connected.
\end{remark}

\begin{example}
Consider the $P$-semigroup $S$ given in Example \ref{ej:P-smgp}, and let $\preceq$ be the graded lexicographical order. So, $\Fb(S)=(3,7)$. In this situation, the elements of the automaton $M=(\Sigma, {\cal A}, \sigma_{0}, F, f)$ associated to $S$ are the following:
\begin{itemize}
\item $\Sigma=\{(0,0), (1,1), (2,2), (2,3), (3,3), (3,4), (4,3), (4,4), (4,5)\}\cup\{\kappa,\chi\}$.
\item ${\cal A}=\{(1,1), (1,2), (2,0), (2,2), (2,3), (3,3), (4,3), (4,4) \}$.
\item $\sigma_{0}=(0,0)$.
\item $F=\Sigma \setminus\{\sigma_{0}, \chi \}$.
\item A transition function $f$ is given in Table \ref{tab:TablaEstados}.
\end{itemize}

\begin{table}[ht]
    \centering
    \begin{tabular}{||c|c|c|c|c|c|c|c|c||}
    \hline
    State
     & \multicolumn{8}{|c||}{$f$}\\ 
          & \multicolumn{8}{|c||}{Input}\\
          & $(1,1)$ & $p_1=(1,2)$ & $p_2=(2,0)$ & $(2,2)$ & $(2,3)$ & $(3,3)$ & $(4,3)$ & $(4,4)$ \\
          \cline{1-9}
$\sigma_{0}=(0,0)$         & $s_1$ & $\chi$ & $\chi$ & $s_2$ & $s_3$ & $s_4$ & $s_6$ & $s_7$ \\
$s_1=(1,1)$         & $\chi$ & $s_3$ & $\chi$  & $\chi$  & $\chi$  & $\chi$  & $\chi$  & $\chi$  \\
$s_2=(2,2)$         & $\chi$ & $s_5$ & $\chi$ & $\chi$  & $\chi$  & $\chi$  & $\chi$  & $\chi$  \\
$s_3=(2,3)$         & $\chi$ & $\chi$ & $s_6$ & $\chi$  & $\chi$  & $\chi$  & $\chi$  & $\chi$  \\
$s_4=(3,3)$         & $\chi$ & $s_8$ & $\chi$ & $\chi$  & $\chi$  & $\chi$  & $\chi$  & $\chi$  \\
$s_5=(3,4)$         & $\chi$ & $\kappa$ & $\chi$ & $\chi$  & $\chi$  & $\chi$  & $\chi$  & $\chi$  \\
$s_6=(4,3)$         & $\chi$ & $\kappa$ & $\chi$  & $\chi$  & $\chi$  & $\chi$  & $\chi$  & $\chi$  \\
$s_7=(4,4)$         & $\chi$ & $\kappa$ & $\kappa$  & $\chi$  & $\chi$  & $\chi$  & $\chi$  & $\chi$  \\
$s_8=(4,5)$         & $\chi$ & $\kappa$ & $\kappa$  & $\chi$  & $\chi$  & $\chi$  & $\chi$  & $\chi$  \\

$\kappa$        & $\chi$ & $\kappa$ & $\kappa$  & $\chi$  & $\chi$  & $\chi$  & $\chi$  & $\chi$  \\
$\chi$         & $\chi$ & $\chi$  & $\chi$  & $\chi$  & $\chi$  & $\chi$  & $\chi$  & $\chi$  \\

        \hline
    \end{tabular}
    \caption{The state table for the automaton}
    \label{tab:TablaEstados}
\end{table}

Figure \ref{fig:Path} illustrates a path inside $S$ following the directions in $P$, corresponding to
the string $s_{1}p_{1}p_{2}p_{1}$, that is recognized by the automaton.
    \begin{figure}[ht]
    \centering
    \includegraphics[scale=.4]{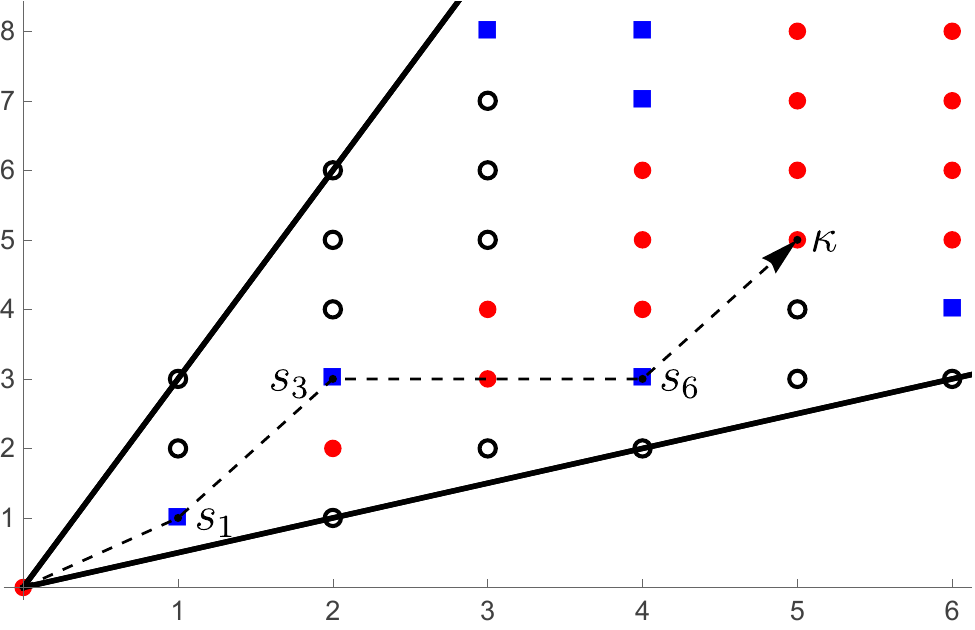}
    \caption{A path inside the $P$-semigroup}
    \label{fig:Path}
\end{figure}

\end{example}

The following result is a direct consequence of the definition of the automaton $M(S)$.

\begin{lemma}
The language recognized by $M(S)$ is contained in the set of all the strings $\alpha_1\alpha_2\cdots\alpha_n$ such that $\alpha_1\in{\cal A}\setminus P$, and $\alpha_i\in P$ for all $i>1$. In fact, the words recognized by $M(S)$ correspond to paths inside the cone $\CaC$ starting at points in $G\cup Q_{P}$, and following directions in $P$, which always keep inside $S$.
\end{lemma}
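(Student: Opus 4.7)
The plan is to verify both assertions by direct inspection of the transition function $f$, combined with a short induction on the length of the word being processed. Since $\chi$ is a dead non-accepting state, any recognized word $\alpha_{1}\alpha_{2}\cdots\alpha_{n}$ has to pass only through non-$\chi$ states, so it suffices to understand which inputs keep the automaton out of $\chi$ at each step. For the structural claim I would first examine the transition out of $\sigma_{0}=0$: $f(0,\alpha)$ takes a non-$\chi$ value only when $\alpha\in\opN(S)\setminus\{0\}$ (moving to the state $\alpha$) or when $\alpha\in S\setminus\opN(S)$ (moving to $\kappa$), and among the symbols of the alphabet these are precisely the starting points in $G\cup Q_{P}$; this pins down $\alpha_{1}\in\mathcal{A}\setminus P$. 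Next, inspecting the transitions from any $q\in\opN(S)\setminus\{0\}$ and from $\kappa$ shows that every input outside $P$ is routed straight to $\chi$, so once the automaton has left the initial state, every subsequent letter must belong to $P$ to avoid rejection, yielding $\alpha_{i}\in P$ for every $i>1$.

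For the semantic interpretation I would prove by induction on $k$ that, if the automaton has read $\alpha_{1}\cdots\alpha_{k}$ without entering $\chi$, then its current state is either the partial sum $\alpha_{1}+\cdots+\alpha_{k}$ (precisely when this sum still lies in $\opN(S)$) or $\kappa$ (when it lies in $S$ but exceeds $\Fb(S)$). The base case $k=1$ is the description of $f(0,\alpha_{1})$ just obtained, and the inductive step is immediate from the clauses defining $f(q,\alpha)$ for $q\in\opN(S)\setminus\{0\}$ and $f(\kappa,\alpha)$, both of which send $(q,\alpha)$ to $q+\alpha$ precisely when that element remains in $S$. Consequently every partial sum lies in $S$, so the recognized word traces a path in $\CaC$ that starts at a point of $G\cup Q_{P}$, proceeds by successive $P$-directions, and never leaves $S$, which is exactly the geometric description given in the statement.

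The main obstacle I anticipate is the mild ambiguity introduced by the fact that the union $\mathcal{A}=P\cup G\cup Q_{P}$ is not disjoint: a symbol could in principle play the double role of starting point and $P$-direction, and the case analysis at state $0$ must appeal to the deterministic rule of $f$ in order to classify $\alpha_{1}$ unambiguously. Beyond this small caveat, the verification reduces to a routine enumeration of the cases in the definition of the transition function.
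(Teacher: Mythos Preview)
Your approach—case analysis of the transition function followed by a short induction on the length of the processed word—is exactly what the paper has in mind; in fact the paper supplies no proof at all, declaring the lemma ``a direct consequence of the definition of the automaton $M(S)$.'' So your proposal is not a different route but simply a spelled-out version of the omitted verification.

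One small point to tighten in your inductive step: you assert that the transition from $\kappa$ lands in an accepting state ``precisely when'' the running partial sum remains in $S$, but actually $f(\kappa,\alpha)=\kappa$ for every $\alpha\in P$ unconditionally, so once the path exceeds $\Fb(S)$ the automaton no longer checks membership in $S$ (or even in $\CaC$, should some $p\in P$ lie outside $\CaC$). This does not affect the structural claim about the shape $\alpha_1\in\mathcal{A}\setminus P$, $\alpha_i\in P$ for $i>1$, and the geometric clause of the lemma is meant to be read only while the partial sums are still in $\opN(S)$; but your inductive hypothesis and conclusion should be phrased accordingly. Your caveat about the non-disjointness of $P\cup G\cup Q_P$ is well placed: strictly speaking, if some $p\in P$ happens to lie in $S$ the first transition $f(0,p)$ is accepting yet $p\notin\mathcal{A}\setminus P$, so the first containment is to be understood under the tacit convention that symbols in $P\cap(G\cup Q_P)$ are counted as starting points rather than directions.
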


The following result states that $M(S)$ detects if $S$ is actually a $P$-semigroup.

\begin{theorem}\label{MainTheoremAutomata}
A $\CaC$-semigroup $S$ is a $P$-semigroup if and only if every string $\alpha_1\alpha_2\cdots\alpha_n$ recognized by $M(S)$ can be extended to a
string $\alpha_1\alpha_2\cdots\alpha_n\alpha_{n+1}$ also recognized by $M(S)$.
\end{theorem}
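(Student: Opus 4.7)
The plan is to prove both implications directly from the definitions, invoking the Proposition above to reduce the $P$-semigroup condition to the minimal generating set $\msg(S)$.

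For the forward direction, I would fix an accepted string $w=\alpha_1\cdots\alpha_n$ and analyse the terminal state $\sigma=f^{\ast}(\sigma_{0},w)\in F=(\opN(S)\setminus\{0\})\cup\{\kappa\}$. If $\sigma=\kappa$, any $p\in P$ (non-empty by hypothesis) yields $f(\kappa,p)=\kappa\in F$, so $w p$ is accepted. If $\sigma\in\opN(S)\setminus\{0\}\subseteq S\setminus\{0\}$, the $P$-semigroup hypothesis provides some $p\in P$ with $\sigma+p\in S$. Inspecting the transition rule, $f(\sigma,p)$ equals $\sigma+p$ when $\sigma+p\in\opN(S)$ and $\kappa$ otherwise, both in $F$. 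Hence $w\alpha_{n+1}$ with $\alpha_{n+1}=p$ is accepted.

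For the backward direction, by the Proposition it suffices to exhibit, for every $a\in\msg(S)$, a $p\in P$ with $a+p\in S$. I would split on whether $a\preceq\Fb(S)$ or $a\succ\Fb(S)$. If $a\in G$, then $a\in{\cal A}$ and $f(\sigma_{0},a)=a\in F$, so the length-one string $a$ is accepted. By hypothesis it extends to an accepted $a\alpha_{2}$; the transition table at state $a\neq 0$ shows that the only way to avoid $\chi$ is $\alpha_{2}\in P$ with $a+\alpha_{2}\in S$ (the destination being either $a+\alpha_{2}\in\opN(S)$ or $\kappa$), yielding the required $p=\alpha_{2}$. If instead $a\succ\Fb(S)$, the automaton does not see $a$ directly, but compatibility of $\preceq$ with addition gives $a+p\succeq a\succ\Fb(S)$ for every $p\in P$, so $a+p\in S$ as soon as $a+p\in\CaC$, since $\Fb(S)$ is by definition the maximum element of $\CaH(S)=\CaC\setminus S$. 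Under the standing assumption $P\subseteq\CaC$ (closure of $\CaC$ then forcing $a+p\in\CaC$), this case is immediate.

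The main obstacle I anticipate is precisely this last case: the automaton's alphabet only contains minimal generators that lie in $\opN(S)$, so the hypothesis about extending accepted strings has no direct witness for generators above $\Fb(S)$. The clean resolution is to notice that this case is semigroup-theoretically trivial rather than automaton-theoretic, provided one explicitly invokes the order-compatibility of $\preceq$ together with the defining property of $\Fb(S)$. Once this dichotomy is isolated, everything else is a routine reading of the transition table.
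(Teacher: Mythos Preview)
Your forward direction is essentially the paper's. For the converse, the paper argues by contrapositive: assuming $S$ is not a $P$-semigroup, it asserts that a witness $s$ with $(\{s\}+P)\cap S=\emptyset$ lies in $\opN(S)\setminus\{0\}$, and then invokes connectedness of $M(S)$ to produce an accepted string reaching the state $s$, which then cannot be extended. Your route is direct and bypasses connectedness entirely: you reduce to $\msg(S)$ via the Proposition on minimal generators and handle each small generator $a\in G$ with the length-one accepted string $a$, which must extend by hypothesis. This decomposition is cleaner, since you never need to reach an arbitrary element of $\opN(S)$---only elements of $G$, which lie in the alphabet by construction and are therefore reached from $\sigma_0$ in a single step.

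You have also isolated a genuine subtlety that the paper glosses over. For generators $a\succ\Fb(S)$ you need $a+p\in\CaC$ for some $p\in P$; dually, the paper's contrapositive needs that any failing witness can be taken in $\opN(S)$. Neither follows from the hypotheses in force in Section~2, which only assume $P\subset\N^d\setminus\{0\}$. Your proposed fix ``$P\subseteq\CaC$'' is not a standing assumption of the paper and is stronger than needed: $P\cap\CaC\neq\emptyset$ already suffices (pick such a $p$ and use closure of $\CaC$ under addition), but even that is only assumed from Section~3 onward. So both arguments share this residual gap; you have simply made it explicit rather than hidden it.
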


\begin{proof}

It is clear that if $S$ is a $P$-semigroup and the string is recognized by $M(S)$, one can choose a suitable symbol $\alpha\in P$ that, when applied to the accepted state $\sigma_n$, leads again to an accepted state.

Conversely, if $S$ is not a $P$-semigroup, then there exists an element $s\in \opN(S)\setminus\{0\}$ such that $s+p\not\in S$ for all $p\in P$.
Since the automaton is connected, there exists a string $\alpha=\alpha_1\alpha_2\cdots\alpha_n$ recognized by $M(S)$ such that $f(0,\alpha)=s$, and this string cannot be extended to any recognized string.

\end{proof}

\section{The elements of $\CaS(P)$ with a given genus}\label{Sec3}

Let $P\subset \N^d\setminus\{0\}$ be a finite set. Recall that the set of all $P$-semigroups is denoted by $\CaS(P)$.
In this section, we compute all $P$-semigroups with a given genus using a rooted directed graph, and we illustrate the procedure with an example. Note that, if $P\cap \CaC=\emptyset$, then it is not guaranteed that the set $\CaS(P)$ is not empty.
So, from now on, we assume that $P \cap \CaC \neq \emptyset$. In this situation, we can state that $\CaS(P)$ is infinite, since for any $f \in \CaC \setminus \{0\}$, the $\CaC$-semigroup $\Delta(f)=\{x\in \CaC \mid x\succ f\}\cup \{0\}$ is a $P$-semigroup. For numerical semigroups, such a semigroup is known as a half-line or an ordinary semigroup. For non-numerical $\CaC$-semigroups, we refer to them as ordinary $\CaC$-semigroups. The terminology is inspired by \cite{R-T-V-Ordinario}, although the authors use the notion of ordinary semigroup based on the conductor rather than the Frobenius element.
The concept of ordinary semigroup adopted here is also not equivalent to the definition presented in \cite{CistoOrdinary}.

Observe that the maximum element of $\CaS(P)$ with respect to the inclusion is $\CaC$. The following lemma is needed for the upcoming definition.

\begin{lemma}\label{lemma:SUfb}
Let $S$ be a $P$-semigroup. If $S\ne \CaC$, then $S\cup \{\Fb(S)\}$ is also a $P$-semigroup.
\end{lemma}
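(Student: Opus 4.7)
The plan is to establish two things in turn: first, that $S':=S\cup\{\Fb(S)\}$ is itself a $\CaC$-semigroup (so in particular an affine semigroup, to which the $P$-semigroup condition can be applied); second, that the defining condition $(\{s\}+P)\cap S'\ne\emptyset$ holds for every $s\in S'\setminus\{0\}$.

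For the first step, since $\CaC\setminus S'=\CaH(S)\setminus\{\Fb(S)\}$ is a finite subset of $\CaC$, and $0\in S\subseteq S'$, the only nontrivial point is closure under addition. The key observation, repeated in the three relevant cases, is that $\Fb(S)\succ 0$ (because $\Fb(S)\in\CaC\setminus S$ and $0\in S$, together with $0\preceq x$ for every $x\in\N^d$). Hence for any $s\in S\setminus\{0\}$ one has $\Fb(S)+s\succ\Fb(S)$; since $\Fb(S)+s$ also lies in $\CaC$ (as $\CaC$ is a monoid containing both summands), it cannot be a gap, so $\Fb(S)+s\in S\subseteq S'$. The same reasoning gives $2\Fb(S)\in S\subseteq S'$, and of course $\Fb(S)+0=\Fb(S)\in S'$.

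For the second step, take $s\in S'\setminus\{0\}$. If $s\in S\setminus\{0\}$, the hypothesis that $S$ is a $P$-semigroup directly supplies $p\in P$ with $s+p\in S\subseteq S'$. The remaining case $s=\Fb(S)$ is where the standing assumption $P\cap\CaC\ne\emptyset$ comes in: since $0\notin P$, there exists $p\in P\cap\CaC$ with $p\succ 0$, and then $\Fb(S)+p\in\CaC$ with $\Fb(S)+p\succ\Fb(S)$, so by maximality of the Frobenius element it lies in $S\subseteq S'$.

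The main obstacle is precisely the case $s=\Fb(S)$, which is the only point that goes beyond inheriting the $P$-semigroup property from $S$ and that requires the global hypothesis $P\cap\CaC\ne\emptyset$; without it, no guarantee exists that $\Fb(S)+p$ stays inside $\CaC$ for some $p\in P$, and the conclusion could fail. Everything else is a direct consequence of the order-theoretic fact that adding any nonzero element of $\CaC$ to $\Fb(S)$ strictly exceeds $\Fb(S)$ and therefore exits the gap set.
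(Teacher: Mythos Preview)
Your proof is correct and follows essentially the same approach as the paper's. The paper dismisses the $\CaC$-semigroup verification with ``Clearly'' and handles the case $s=\Fb(S)$ by asserting $s+p\succ\Fb(S)$ without explicitly invoking the standing assumption $P\cap\CaC\ne\emptyset$; you spell out both of these points carefully, which is an improvement in rigor but not a difference in strategy.
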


\begin{proof}
Clearly, $S \cup \{\Fb(S)\}$ is a $\CaC$-semigroup. It remains to verify that for every $s \in S \cup \{\Fb(S)\}$ there exists $p \in P$ such that $s+p$ belongs to $S \cup \{\Fb(S)\}$.  If $s \in S$, then $s+p \in S$ for some $p \in P$, since $S \in \CaS(P)$. If $s = \Fb(S)$, then $s+p \succ \Fb(S)$, and thus $s+p \in S \cup \{\Fb(S)\}$.
\end{proof}

Consider the graph $G\left(\CaS(P)\right)$, whose vertex set is $\CaS(P)$, and where a pair $(S,T)\in \CaS^2(P)$ is an edge if $T=S \cup \{\Fb(S)\}$. In this case, we say that $S$ is a child of $T$.  The following result describes the structure of $G\left(\CaS(P)\right)$ and provides an explicit characterization of the children of each vertex, which is a key step in the recursive construction of the graph.

\begin{theorem}\label{thr:tree}
The graph $G\left(\CaS(P)\right)$ is a tree with root $\CaC$. Moreover,  the set of children of any $T\in \CaS(P)$ is given by
\[
\{ T \setminus \{a\}\in \CaS(P) \mid a \in \msg(T),\ a \succ \Fb(T)\}.
\]
\end{theorem}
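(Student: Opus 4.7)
The plan is to prove the two assertions separately: first that $G(\CaS(P))$ is a tree rooted at $\CaC$, and then the characterization of the children.

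For the tree structure, I would define the map $\pi\colon\CaS(P)\setminus\{\CaC\}\to\CaS(P)$ by $\pi(S)=S\cup\{\Fb(S)\}$, which is well-defined and lands in $\CaS(P)$ by Lemma \ref{lemma:SUfb}. Its key property is $\g(\pi(S))=\g(S)-1$. Iterating $\pi$ from any starting vertex strictly decreases the genus, so after finitely many steps one reaches a $\CaC$-semigroup of genus zero, i.e., $\CaC$ itself. Since every edge $(S,T)$ satisfies $T=\pi(S)$, each non-root vertex has a unique parent and a unique directed path to $\CaC$, yielding the claimed tree structure.

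For the characterization I would prove both inclusions. For the forward direction, suppose $S$ is a child of $T$, set $a=\Fb(S)$, so $S=T\setminus\{a\}$. To see $a\succ\Fb(T)$, note $\Fb(T)\in\CaC\setminus T\subseteq\CaC\setminus S=\CaH(S)$, hence $\Fb(T)\preceq\Fb(S)=a$; equality is excluded because $a\in T$ while $\Fb(T)\notin T$. To see $a\in\msg(T)$, suppose for contradiction that $a=t_1+t_2$ with $t_1,t_2\in T\setminus\{0\}$; neither $t_i$ can equal $a$ (else the other would be $0$), so both lie in $S$, forcing $a\in S$, a contradiction. For the converse, take $a\in\msg(T)$ with $a\succ\Fb(T)$ such that $S:=T\setminus\{a\}\in\CaS(P)$. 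The minimal-generator property of $a$ in $T$ again guarantees that no sum of two nonzero elements of $S$ equals $a$, so $S$ is closed under addition; its complement $\CaC\setminus S=\CaH(T)\cup\{a\}$ is finite, so $S$ is a $\CaC$-semigroup. Any gap of $S$ is either $a$ or a gap of $T$, and all gaps of $T$ are $\preceq\Fb(T)\prec a$, so $\Fb(S)=a$ and consequently $T=S\cup\{\Fb(S)\}$, i.e., $S$ is a child of $T$.

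The main obstacle is the bookkeeping around the minimal-generator condition: establishing $\Fb(S)\in\msg(T)$ in the forward direction and that $T\setminus\{a\}$ genuinely remains a $\CaC$-semigroup in the converse. Both steps reduce to the single observation that a minimal generator of an affine semigroup cannot be decomposed as a sum of two nonzero semigroup elements, which is exactly the defining property of $\msg$.
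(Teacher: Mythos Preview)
Your proposal is correct and follows essentially the same approach as the paper: iterate the parent map $S\mapsto S\cup\{\Fb(S)\}$ (well-defined by Lemma~\ref{lemma:SUfb}), observe that the genus strictly decreases so every vertex reaches $\CaC$ along a unique path, and then verify both inclusions for the description of the children. Your treatment is in fact more careful than the paper's, which in the forward direction simply asserts without argument that $a=\Fb(S)$ lies in $\msg(T)$ and satisfies $a\succ\Fb(T)$; you supply both justifications explicitly. The only redundancy is in your converse direction, where you re-verify that $S=T\setminus\{a\}$ is a $\CaC$-semigroup even though $S\in\CaS(P)$ is part of the hypothesis; this is harmless but unnecessary.
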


\begin{proof}
Let $S \in \CaS(P)$ and consider the sequence $\{S_i\}_{i \in \mathbb{N}}$ defined by starting at $S_0 = S$, and for each $i \geq 0$, define $S_{i+1}=S_i\cup\{\Fb(S_i)\}$  if $S_i\ne\CaC$, and $S_{i+1}=\CaC$, otherwise.  Clearly, this sequence stabilizes at $S_{\g(S)} = \mathcal{C}$. By the uniqueness of the Frobenius element, we conclude that $G\left(\CaS(P)\right)$ is a tree with root $\CaC$.

Now, assume that $S$ is a child of $T$ in $G\left(\CaS(P)\right)$, that is, $T = S \cup \{\Fb(S)\} \in \CaS(P)$. So, $S = T \setminus \{a\} \in \CaS(P)$, where $a = \Fb(S)$, which implies that $a \in \msg(T)$ and $a \succ \Fb(T)$.
Conversely, let $T \setminus \{a\} \in \CaS(P)$ for some $a \in \msg(T)$ satisfying $a \succ \Fb(T)$. Then, $(T \setminus \{a\}, T) \in \CaS(P)^2$ is an edge in $G\left(\CaS(P)\right)$, since  $T = \big(T \setminus \{a\}\big) \cup \big\{\Fb(T \setminus \{a\})\big\}$, and $\Fb(T \setminus \{a\}) = a$.
\end{proof}

Notice that the condition $T \setminus \{a\}\in \CaS(P)$ is not straightforward to check computationally. To address this, the following statement presents a characterization that facilitates the computational verification of membership.

\begin{proposition}\label{prop:caracREMOVEelement}
Let $S\in \CaS(P)$ and $a\in \msg(S)$. Then, $S\setminus\{a\}\notin \CaS(P)$ if and only if there exists $p\in P$ such that $a-p\in S\setminus\{0\}$, and $a-p+p'\notin S$ for all $p'\in P\setminus\{p\}$.
\end{proposition}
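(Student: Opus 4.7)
The plan is to unwind the $P$-semigroup condition applied to $S\setminus\{a\}$. Because $a\in\msg(S)$, the set $S\setminus\{a\}$ is automatically a $\CaC$-semigroup, so the only thing that can fail is the arrow-condition $(\{t\}+P)\cap (S\setminus\{a\})\neq\emptyset$ for every nonzero $t\in S\setminus\{a\}$. Thus $S\setminus\{a\}\notin\CaS(P)$ is equivalent to the existence of some $s\in S\setminus\{0,a\}$ such that $s+p\notin S\setminus\{a\}$ for every $p\in P$. I would state this reformulation first and then prove the two implications by translating it into arithmetic statements about $a$, $p$, and $S$.

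For the implication $(\Leftarrow)$, I would set $s:=a-p$, which lies in $S\setminus\{0\}$ by hypothesis. The hypothesis $0\notin P$ implies $s\neq a$, so $s\in S\setminus\{0,a\}$. For each $p''\in P$, I split cases: if $p''=p$ then $s+p''=a\notin S\setminus\{a\}$; if $p''\neq p$ then $s+p''=a-p+p''\notin S$ by assumption, hence $\notin S\setminus\{a\}$. This shows $(\{s\}+P)\cap (S\setminus\{a\})=\emptyset$ and therefore $S\setminus\{a\}\notin\CaS(P)$.

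For the converse $(\Rightarrow)$, I would take $s\in S\setminus\{0,a\}$ witnessing the failure. Since $S\in\CaS(P)$, there is some $p\in P$ with $s+p\in S$; but $s+p\notin S\setminus\{a\}$ forces $s+p=a$, so $s=a-p\in S\setminus\{0\}$. For any other $p'\in P\setminus\{p\}$, the equality $s+p'=a$ would give $p'=p$, a contradiction, so $s+p'\notin S$, i.e.\ $a-p+p'\notin S$, which is exactly the required condition.

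The argument is essentially a careful unpacking of the definitions; the only point that needs real attention (and could easily be overlooked) is the corner case $s=a$. This is ruled out precisely by the standing assumption $0\notin P$, so as long as I flag this at the right moment the proof is a clean case analysis with no further obstacles.
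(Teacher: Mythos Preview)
Your proposal is correct and follows essentially the same approach as the paper's proof: both directions hinge on the observation that a witness $s\in S\setminus\{0,a\}$ to the failure of the $P$-condition must satisfy $(\{s\}+P)\cap S=\{a\}$, whence $s=a-p$ for some $p\in P$. Your write-up is simply more explicit than the paper's---in particular you spell out the $(\Leftarrow)$ direction and flag the role of the standing hypothesis $0\notin P$ in ensuring $s\neq a$, which the paper leaves implicit.
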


\begin{proof}
Assume $S \setminus \{a\} \notin \CaS(P)$. So, there exists $s \in S \setminus \{0,a\}$ such that $(\{s\}+P) \cap (S \setminus \{a\}) = \emptyset$.
Since $S$ is a $P$-semigroup, we deduce that $(\{s\}+P) \cap S = \{a\}$.
Let $p \in P$ with $s+p = a$. Then, $a-p = s \in S \setminus \{0\}$.
In particular, $a-p+p' \notin S$ for all $p' \in P \setminus \{p\}$.
Conversely, clearly $a-p\in S\setminus\{a,0\}$ and $(\{a-p\}+P)\cap (S\setminus\{a\})=\emptyset$.
\end{proof}

Theorem \ref{thr:tree} can be applied repeatedly without limitation, and the set $\CaS(P)$ contains infinitely many elements. However, from a computational perspective, it is not feasible without imposing restrictions.  Therefore, to design an algorithm, we fix the genus as indicated.

\begin{algorithm}[h]
\caption{Computing $P$-semigroups with genus $g$.}\label{ComputeCaA(P)fixgenus}
\KwIn{A non-negative integer cone $\CaC$, a finite subset $P$, a monomial order $\preceq$, and a positive integer $g$.}
\KwOut{The set  $\{S\in \CaS(P)\mid g(S)=g\}$.}
\If {$g=0$}
    {\Return{$\CaC$}}
$X \leftarrow \{\CaC\}$\; \label{lineROOT}
\For{$1\leq i \leq g$}{
    $Y \leftarrow \emptyset$\;
    \While {$X\ne\emptyset$}{
        $T \leftarrow \text{First}(X)$\;
        $A \leftarrow \{x\in \msg(T)\mid x\succ \Fb(T)\}$\; \label{lineMSG}
        $B \leftarrow A$\;
        \While {$B \ne\emptyset$}{
           $a \leftarrow \text{First}(B)$\;
           \If{$a-p\in S$ for some $p\in P$, and $a-p+p'\notin S$ for all $p'\in P\setminus\{p\}$}{
               $A \leftarrow A\setminus \{a\}$\;
           }
           $B \leftarrow B\setminus \{a\}$\;
        }
        $Y \leftarrow Y\cup\{T\setminus\{x\}\mid x\in A\}$\;
        $X \leftarrow X\setminus \{T\}$\;
    }
    $X \leftarrow Y$\;
}
    \Return{$X$}
\end{algorithm}

\begin{example}\label{example:treeGenus}
Let $\CaC$ be the non-negative integer cone delimited by the extreme rays generated by $\{(5,1),(3,1)\}$. Consider $P=\{(1,4),(3,1)\}$, and the graded lexicographical order. The tree given in Figure \ref{fig:treeGenus} shows all the $P$-semigroups up to genus 3, where each node represents a $P$-semigroup. The label of a non-root node corresponds to the minimal generator removed in each loop of the algorithm. Since the root has genus zero, the first level of the tree is the set of $P$-semigroups with genus one, and so on.
\begin{figure}[ht]
\resizebox{\textwidth}{!}{%
\begin{tikzpicture}
\tikzset{level distance=3em}
\small
\Tree
[.$\CaC$
        [.(4,1)
            [.(5,1) (7,2) (8,2) (9,2) (10,2) (14,3) (15,3) ]
            [.(7,2) (9,2) (10,3) ]
            [.(9,2) (14,3) ]
        ]
        [.(5,1)
            [.(9,2) (10,2) (15,3) ]
            [.(10,2) (14,3) (15,3) (20,4) (25,5) ]
            [.(15,3) (25,5) ]
        ]
]

\end{tikzpicture}
}

\caption{The first three levels of $G\left(\CaS(P)\right)$}
\label{fig:treeGenus}
\end{figure}
\end{example}

\section{The elements of $\CaS(P)$ with a given Frobenius element}\label{Sec4}

Fix $f \in \CaC \setminus \{0\}$. We denote by $\CaS(P,f)$ the set of all $P$-semigroups with Frobenius element equal to $f$.  A simple example of an element in $\CaS(P,f)$ is the $\CaC$-semigroup $\Delta(f)=\{x\in \CaC \mid x\succ f\}\cup\{0\}$, which is the minimal element of $\CaS(P,f)$ with respect to the inclusion.

The main goal of this section is to compute the set $\CaS(P,f)$. To this end, we construct a rooted tree whose vertex set is $\CaS(P,f)$. An observation in this regard is summarized in the next lemma.

\begin{lemma}\label{lemma:Sremovem(S)}
Let $S\in \CaS(P,f)\setminus \{\Delta(f)\}$. Then, $S\setminus\{\m(S)\}\in \CaS(P,f)$.
\end{lemma}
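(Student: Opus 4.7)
The plan is to verify three things about $T := S\setminus\{\m(S)\}$: that $T$ is a $\CaC$-semigroup, that $T$ satisfies the $P$-semigroup condition, and that $\Fb(T)=f$. The preliminary key observation, which will be used repeatedly, is that $\m(S) \in \msg(S)$: indeed, if $\m(S)=a+b$ for some $a,b\in S\setminus\{0\}$, then both $a,b\succeq \m(S)$ by minimality of the multiplicity, so $a+b\succ \m(S)$ (using that $0\prec \m(S)$ and compatibility of $\preceq$ with addition), which is a contradiction.

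For the first part, I would argue that $T$ is closed under addition: if $a,b\in T$ with $a+b=\m(S)$, then either one of $a,b$ equals $\m(S)$ (impossible since $\m(S)\notin T$), or both are nonzero, in which case $a+b\succ \m(S)$ as above. Together with $0\in T$ and the fact that $\CaC\setminus T=\CaH(S)\cup\{\m(S)\}$ is finite, this shows that $T$ is a $\CaC$-semigroup. For the $P$-semigroup condition, given $s\in T\setminus\{0\}$, the fact that $S\in\CaS(P,f)$ yields some $p\in P$ with $s+p\in S$; since $s\succeq \m(S)$ and $p\succ 0$ (because $0\notin P$ by the standing assumption), we obtain $s+p\succ \m(S)$, hence $s+p\in T$.

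The step that requires the hypothesis $S\neq\Delta(f)$ is the third one. From $\CaH(T)=\CaH(S)\cup\{\m(S)\}$ it follows that $\Fb(T)=\max_\preceq(f,\m(S))$, so I need $\m(S)\prec f$. This is where I would use that $\Fb(S)=f$ forces $\Delta(f)\subseteq S$ (every element of $\CaC$ that is $\succ f$ lies in $S$, and $0\in S$), while $S\neq\Delta(f)$ produces some $s\in S\setminus\Delta(f)$, i.e.\ $0\prec s\preceq f$ with $s\neq f$ (since $f$ is a gap), giving $\m(S)\preceq s\prec f$, and therefore $\Fb(T)=f$.

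The proof is essentially mechanical once the bookkeeping is right; the only conceptually delicate point is the last one, namely identifying $S\neq\Delta(f)$ as precisely the condition that prevents $\m(S)$ from overtaking $f$ in the order $\preceq$ when we enlarge the gap set of $S$ by $\m(S)$. All three verifications rely crucially on the compatibility of $\preceq$ with addition and on $0\notin P$.
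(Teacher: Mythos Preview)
Your proof is correct and follows essentially the same approach as the paper's: verify that $T$ is a $\CaC$-semigroup, that $\m(S)\prec f$ because $S\neq\Delta(f)$, and that for $s\in T\setminus\{0\}$ the element $s+p$ cannot equal $\m(S)$. The paper compresses each of these into a single sentence (e.g., ``Clearly, $S\setminus\{\m(S)\}$ is a $\CaC$-semigroup''), whereas you spell out the details; in particular, your justification of $s+p\succ\m(S)$ via $s\succeq\m(S)$ and $p\succ 0$ is a valid variant of the paper's implicit use of $s\succ\m(S)$ (which already holds since $s\in T$) together with $p\succeq 0$.
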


\begin{proof}
Clearly, $S\setminus\{\m(S)\}$ is a $\CaC$-semigroup. Since $S$ is not an ordinary, then $\m(S)\prec \Fb(S)=f$, and thus $\Fb(S\setminus\{\m(S)\})=f$. For any $s \in S \setminus \{0, \m(S)\}$, the $P$-semigroup condition for $S$ ensures the existence of $p \in P$ with $s + p \in S$. Since $s + p \neq \m(S)$, then $s + p \in S \setminus \{\m(S)\}$.
\end{proof}

We define the graph $G\left(\CaS(P,f)\right)$ as follows:
the vertex set is $\CaS(P,f)$, and $(S,T)\in \CaS^2(P,f)$ is an edge if $T = S \setminus \{\m(S)\}$. We now proceed with the central result of this section.

\begin{theorem}\label{thr:treeA(P,f)}
The graph $G\left(\CaS(P,f)\right)$ is a tree with root $\Delta(f)$. Furthermore, the set of children of any $T\in \CaS(P,f)$ is
\[
\{ T \cup \{x\} \mid x \in \SG(T)\setminus\{f\},\; x\prec \m(T), \text{ and } x+p\in T \text{ for some } p\in P\}.
\]
\end{theorem}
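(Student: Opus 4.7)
The plan is to mirror Theorem \ref{thr:tree}: I would first establish that $G(\CaS(P,f))$ is a tree by showing the parent map $S \mapsto S \setminus \{\m(S)\}$ is well-defined on every non-root vertex and terminates at $\Delta(f)$, and then characterize the children of a given $T$ by inverting this map. For the tree step, I would note that every $T \in \CaS(P,f)$ contains both $0$ and all elements of $\CaC$ strictly greater than $f$, so $\Delta(f) \subseteq T$ and $\g(T) \leq \g(\Delta(f))$; the latter is finite under our standing assumption on $\preceq$. Lemma \ref{lemma:Sremovem(S)} makes the parent map well-defined on every $S \neq \Delta(f)$, and since the map strictly increases the genus, the iteration terminates after at most $\g(\Delta(f))$ steps, necessarily at $\Delta(f)$, which is the only element of $\CaS(P,f)$ where the removal procedure can fail. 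This gives connectivity and acyclicity at once.

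For the children, I would first assume that $S$ is a child of $T$, so $S = T \cup \{x\}$ with $x = \m(S)$. Then $x \in \CaH(T)$, and since $\m(T) \in S \setminus \{0\}$ with $\m(T) \neq x$ (because $\m(T) \in T$ while $x \notin T$), we have $x \prec \m(T)$. Moreover $x \neq f$, since $x \in S$ but $f = \Fb(S) \notin S$. Closure of $T \cup \{x\}$ under addition forces $2x \in T$ and $x + t \in T$ for every $t \in T \setminus \{0\}$, that is, $x \in \SG(T)$. Applying the $P$-semigroup condition of $S$ at $x$ yields some $p \in P$ with $x + p \in S$; since $0 \notin P$ gives $x + p \succ x$, we have $x + p \neq x$, hence $x + p \in T$. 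For the converse, I would take $x$ satisfying the three stated conditions and set $S := T \cup \{x\}$: closure is the content of $x \in \SG(T)$, finiteness of $\CaC \setminus S$ is immediate, $\Fb(S) = f$ because $x \prec f$ (as $x$ is a gap of $T$ distinct from $\Fb(T) = f$) so $f$ remains the maximum of $\CaH(S) = \CaH(T) \setminus \{x\}$, and $\m(S) = x$ because $x \prec \m(T)$; the $P$-semigroup property is inherited on $T$ and holds at $x$ by hypothesis, so $S \in \CaS(P,f)$ and $T = S \setminus \{\m(S)\}$.

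The hard part will be the converse direction, where adjoining a single element must simultaneously preserve semigroup closure, keep $f$ as the Frobenius, install $x$ as the new multiplicity, and maintain the $P$-semigroup condition. The key observation making this tractable is that, although the $P$-semigroup property is a quantified statement over every non-zero element of $S$, it reduces here to a single existence check at the newly added $x$, since every other element of $S$ already belongs to $T$ and inherits the condition from $T \in \CaS(P,f)$.
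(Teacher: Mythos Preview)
Your proposal is correct and follows essentially the same approach as the paper: iterate the parent map $S \mapsto S \setminus \{\m(S)\}$ via Lemma~\ref{lemma:Sremovem(S)} until reaching $\Delta(f)$, then invert it to describe the children. The only cosmetic difference is that the paper measures termination by $\n(S)$ (the number of elements of $S$ below $f$) rather than by the genus, and is terser in the converse direction where you spell out closure, Frobenius, multiplicity, and the $P$-condition explicitly.
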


\begin{proof}
For any $S \in \CaS(P,f)$, define the sequence $\{S_i\}_{i \in \N}$ where $S_0 = S$, and for each $i \geq 0$, $S_{i+1}=S_i\setminus\{\m(S_i)\}$  if $S_i\ne \Delta(f)$, and $S_{i+1}=\Delta(f)$, otherwise. By Lemma~\ref{lemma:Sremovem(S)}, each $S_i$ belongs to $\CaS(P,f)$.
Let $\n(S)$ be the number of elements of $S$ strictly less than $f$ with respect to $\preceq$. Then, the sequence stabilizes at $S_{\n(S)-1} = \Delta(f)$, and, by using the uniqueness of $\m(S_i)$, we deduce that every $S \in \CaS(P,f)$ admits a unique directed path to the root $\Delta(f)$, and the graph $G(\CaS(P,f))$ is a tree.
For the second statement, suppose that $S=T \cup \{x\}$, with $x = \m(S)$ is a child of $T$. Since $S\in \CaS(P,f)$, then  $x \in \SG(T)\setminus\{f\}$ and at least there exists $p\in P$ such that $x+p\in S\setminus\{x\}=T$. Furthermore, $x=\m(S)=\min_\preceq\big((T\setminus\{0\}
)\cup\{x\}\big)\prec\min_\preceq(T\setminus\{0\}
)=\m(T)$. Conversely, let $x \in \SG(T) \setminus \{f\}$ such that $x + p \in T$ for some $p \in P$. So, $S=T \cup \{x\}\in \CaS(P,f)$. If $x \prec \m(T)$, then $S\setminus\{x\}=T$  with $\m(S) = x$, and thus $S$ is a child of $T$.
\end{proof}

We present an algorithm to compute the set $\CaS(P,f)$ using the tree structure described in Theorem~\ref{thr:treeA(P,f)}.

\begin{algorithm}[H]
\caption{Computing $P$-semigroups with a given Frobenius element.}\label{alg:A(P,f)}
\KwIn{Let $\CaC\subseteq\N^d$ be a non-negative integer cone, $f\in\CaC\setminus\{0\}$, $P\subset \N^d$, and $\preceq$ a monomial order.}
\KwOut{The set  $\CaS(P,f)$.}

$A \leftarrow \{\Delta(f)\}$\;
$X\leftarrow A$\;
\While {$A\ne\emptyset$}{
    $Y \leftarrow \emptyset$\;
    $Z\leftarrow A$\;
    \While {$Z\ne\emptyset$}{
        $T \leftarrow \text{First}(Z)$\;
        $B \leftarrow \{x\in \SG(T)\setminus\{f\}\mid x\prec \m(T) \text{ such that } x+p\in T \text{ for some } p\in P\}$\;
        $Y \leftarrow Y\cup\{T\cup\{x\}\mid x\in B\}$\label{lineaAlg2}\;
        $Z \leftarrow Z\setminus \{T\}$\;
    }
    $A \leftarrow Y$\;
    $X \leftarrow X\cup Y$\;
    }
    \Return{$X$}
\end{algorithm}

To compute the set of special gaps for each $\CaC$-semigroup $S$ arising in Algorithm \ref{alg:A(P,f)}, we consider the approach detailed in \cite[Proposition 5 and 6]{R-T-V_Asemigroup}, using Apéry set $\Ap(S, b) = \{a \in S \mid a - b \in \CaH(S)\}$ with respect to $b \in S \setminus \{0\}$. This finite set satisfies
\[
\Ap(S \cup \{x\}, b) \subseteq \{x\} \sqcup \left( \Ap(S, b) \setminus \{x + b\} \right),
\]
for any $x \in \SG(S)$, which permits the computation of $\SG(S \cup \{x\})$ from $\SG(S)$.

To illustrate the application of the algorithm, we present the following example.

\begin{example}\label{example:Frobenius}
Consider the cone $\CaC$, the set $P$, and the monomial order given in Example \ref{example:treeGenus}, and $f=(9,2)$ as the inputs of Algorithm \ref{alg:A(P,f)}. The elements belonging to $\CaS(P,f)$ are detailed in Figure \ref{fig:treeFrobenius}. In this situation, the label of each non-root node is the special gap added in line \ref{lineaAlg2} of Algorithm \ref{alg:A(P,f)}.
The root $\Delta(f)$ is the $P$-semigroup minimally generated by
\begin{multline*}
\{(9,3),(10,2),(10,3),(11,3),(12,3),(12,4),(13,3),(13,4),(14,3),(14,4),\\
(15,3),(15,4),(15,5),(16,4),(16,5),(17,4),(17,5),(18,4),(18,5),(19,4)\}.
\end{multline*}
\begin{figure}[ht]
\centering
\begin{tikzpicture}[scale=0.8]
\tikzset{level distance=3em}
\Tree
[.$\Delta(f)$
        [.(6,2)
            [.(3,1) ]
        ]
        [.(7,2)
            [.(6,2)
                [.(3,1) ]
            ]
        ]
        [.(8,2)
            [.(5,1) ]
            [.(6,2)
                [.(3,1) ]
                [.(5,1)
                    [.(3,1) ]
                ]
            ]
            [.(7,2)
                [.(4,1) ]
                [.(5,1) ]
                [.(6,2)
                    [.(3,1) ]
                    [.(4,1)
                        [.(3,1) ]
                    ]
                    [.(5,1)
                        [.(3,1) ]
                    ]
                ]
            ]
        ]
]
\end{tikzpicture}

\caption{The tree of $G\left(\CaS(P,f)\right)$}
\label{fig:treeFrobenius}
\end{figure}
\end{example}

\section{The elements of $\CaS(P)$ with a given multiplicity}\label{Sec5}

Let $m\in \CaC\setminus\{0\}$. Let $\CaS(P)_m$ be the set formed by all the $P$-semigroups with multiplicity $m$. The main objective in this section is to compute the set $\CaS(P)_m$. First, we study the finiteness of the set $\CaS(P)_m$, distinguishing several cases.

Set $p\in P \cap \CaC$, and consider the $P$-semigroup $\widetilde{S}_a=\{0\}\cup \langle m, p \rangle \cup \{x\in \CaC \mid x\succ a\}$ for some $a\in \CaC$ with $a\succ m$. Note that if $p\prec m$, there exists $k\in \N$ such that $kp\succ m$, and we define $\widetilde{S}_a'=\widetilde{S}_a\setminus\{p, 2p, \ldots ,(k-1)p\}$, which is an element of $\CaS(P)_m$.
Otherwise, that is $p\succeq m$, then $\widetilde{S}_a'= \widetilde{S}_a\in \CaS(P)_m$.
For both cases, we have that
\[
\bigcup_{\substack{ a \in \mathcal{C}\\ a\succ m}} \widetilde{S}'_a \subseteq \CaS(P)_m.
\]
Observe that, when $\langle m, p \rangle$ is not a $\CaC$-semigroup, then  $\CaS(P)_m$ contains an infinite union of different $P$-semigroups with multiplicity $m$. So, analyzing the finiteness of $\CaS(P)_m$ is equivalent to determine when  $\langle m, p \rangle $ is a $\CaC$-semigroup.
Therefore, for those cones $\CaC\subset\N^d$ with at least $d\geq 3$ extreme rays, it is known that  $\langle m, p \rangle $ is not a $\CaC$-semigroup (see \cite{R-T-V_Asemigroup}). For $d=2$, $\langle m, p \rangle $ is a $\CaC$-semigroup if and only if $\langle m, p \rangle=\CaC$ (see \cite{diaz2022characterizing}). For $d=1$, that is, for numerical semigroups, we present the following result.

\begin{proposition}
The set $\CaS(P)_m$ is finite if and only if $\langle m, p \rangle$ is a numerical semigroup for every $p\in P$.
\end{proposition}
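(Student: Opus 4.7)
I would prove the two implications by rather different means, and treat the converse as the main task.

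For the forward implication, I would argue by contrapositive. Assume there exists $p\in P$ for which $\langle m,p\rangle$ is not a numerical semigroup; equivalently, $\N\setminus\langle m,p\rangle$ is infinite. Then the family $\widetilde{S}'_a$ introduced in the paragraph preceding the statement already lives in $\CaS(P)_m$ for every $a\succ m$. For $a\prec a'$, the sets $\widetilde{S}'_a$ and $\widetilde{S}'_{a'}$ differ precisely on $(a,a']\setminus \langle m,p\rangle$, and since this difference is non-empty for infinitely many choices of $(a,a')$ (using that $\N\setminus\langle m,p\rangle$ is infinite), we obtain infinitely many pairwise distinct elements of $\CaS(P)_m$. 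This step is essentially bookkeeping on the construction that already appears in the paper.

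For the converse, assume $\gcd(m,p)=1$ for every $p\in P$ and fix $S\in \CaS(P)_m$. Since $m\in S\setminus\{0\}$, the $P$-semigroup condition yields some $p_1\in P$ with $m+p_1\in S$. Closure of $S$ under addition then implies $k(m+p_1)\in S$ for every $k\geq 0$; such an element has value $k(m+p_1)$ and residue $kp_1\pmod m$. Because $\gcd(p_1,m)=1$, the map $k\mapsto kp_1\bmod m$ is a bijection of $\{0,1,\ldots,m-1\}$ onto $\Z/m\Z$, so every residue class is realised by some $k\leq m-1$. Consequently, each element $w_j$ of $\Ap(S,m)$ satisfies $w_j\leq (m-1)(m+p_1)\leq (m-1)(m+\max P)$, which gives the uniform bound
\[
\Fb(S)\;\leq\; (m-1)(m+\max P)-m.
\]
Since there are only finitely many numerical semigroups with a prescribed multiplicity and Frobenius number below a fixed threshold, $\CaS(P)_m$ is finite.

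The substantive step is the converse: the coprimality hypothesis allows the single instance $m+p_1\in S$, which is \emph{forced} by the $P$-semigroup property at the multiplicity, to control every residue class modulo $m$ via semigroup closure. I expect this to be the main obstacle, because the $P$-semigroup condition only provides \emph{one} suitable $p$ at each step, so one cannot freely choose directions; the trick is to use semigroup closure on the element $m+p_1$ rather than iterating the $P$-semigroup condition along an uncontrolled chain. The forward direction, by contrast, requires no new idea beyond verifying that the construction $\widetilde{S}'_a$ actually produces infinitely many distinct semigroups in the non-coprime case.
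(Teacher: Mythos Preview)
Your proposal is correct and follows essentially the same approach as the paper: for the forward direction both you and the paper exhibit an explicit infinite family of $P$-semigroups with multiplicity $m$ when some $\gcd(m,p)>1$ (the paper uses $\{0\}\cup\{m+kt\mid k\in\N\}\cup\{x\geq am\}$ with $t=\gcd(m,p)$ rather than the $\widetilde{S}'_a$, but the idea is identical), and for the converse both arguments apply the $P$-condition at $m$ to get $m+p_1\in S$, then use $\gcd(m,p_1)=1$ to conclude that $\langle m,m+p_1\rangle\subseteq S$ is a numerical semigroup, which forces finiteness. Your explicit Ap\'ery/Frobenius bound is simply a quantitative version of the paper's oversemigroup count.
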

\begin{proof}
Suppose that there exists a natural number $p\in P$ such that $\langle m, p \rangle$ is not a numerical semigroup, and let us see that $\CaS(P)_m$ is infinite. By \cite[Lemma 2.1]{libroRosales}, we have that $\gcd(m,p)=t\ne 1$. Observe that $\{0\}\cup \{m+kt\mid k\in\N\} \cup \{x\in \N \mid x\geq am\}$ is an element of  $\CaS(P)_m$ for every non-zero natural number $a$.  Hence, $\CaS(P)_m$ is infinite.

Conversely, assume that $S\in \CaS(P)_m$. Then, $m$ and $m+p$ belong to $S$ for some $p\in P$, and $\langle m, m+p \rangle\subseteq S$. Consequently,
\[
\CaS(P)_m \subseteq \bigcup_{p\in P}\big\{S \text{ numerical semigroup} \mid \langle m, m+p \rangle\subseteq S\big\}.
\]
Each $\langle m, m+p \rangle$ is a numerical semigroup, since  $\langle m, p \rangle$ is a numerical semigroup. Therefore, $\CaS(P)_m$ is contained in a finite union of finite sets, and thus $\CaS(P)_m$ is finite.
\end{proof}

Following analogous arguments to that developed in the previous sections, we define the graph $G\big(\CaS(P)_m\big)$ as follows: the vertex set is $\CaS(P)_m$, and a pair $(S,T)\in \CaS^2(P)_m$ is an edge if and only if $T=S\cup\{\Fb(S)\}$. From this definition, it is immediate that $\Fb(S)\succ m$.

The following result describes the structure of the graph $G\big(\CaS(P)_m\big)$.

\begin{theorem}\label{thr:treemultiplicity}
The graph $G\big(\CaS(P)_m\big)$ is a tree with root $S_m=\{x\in \CaC \mid x\succeq m\}\cup\{0\}$.    Moreover,  the set of children of any $T\in \CaS(P)_m$ is given by
\[
\{ T \setminus \{a\}\in \CaS(P)_m \mid a \in \msg(T)\setminus\{m\},\ a \succ \Fb(T)\}.
\]
\end{theorem}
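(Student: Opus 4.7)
The proof strategy is to mirror that of Theorem \ref{thr:tree}, but additionally track the multiplicity-$m$ constraint throughout. First I would verify that the claimed root $S_m = \{x \in \CaC \mid x \succeq m\} \cup \{0\}$ does belong to $\CaS(P)_m$: by construction it is a $\CaC$-semigroup of multiplicity $m$, and since $P \cap \CaC \neq \emptyset$, picking any $p \in P \cap \CaC$ gives $s + p \in \CaC$ with $s + p \succ m$ for every $s \in S_m \setminus \{0\}$, hence $s+p \in S_m$ and $S_m \in \CaS(P)_m$.

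For the tree structure, given $S \in \CaS(P)_m$ I would define the sequence $S_0 = S$, and $S_{i+1} = S_i \cup \{\Fb(S_i)\}$ while $S_i \neq S_m$, with $S_{i+1} = S_m$ thereafter. Lemma \ref{lemma:SUfb} ensures $S_i \cup \{\Fb(S_i)\} \in \CaS(P)$. The key observation is that the multiplicity is preserved: any $T \in \CaS(P)_m$ satisfies $T \subseteq S_m$ because every nonzero element of $T$ is $\succeq m$, so whenever $S_i \neq S_m$ there exists a gap of $S_i$ that is $\succeq m$; since $m \in S_i$, this gap is strictly greater than $m$, and therefore $\Fb(S_i) \succ m$. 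Consequently $S_{i+1}$ still has multiplicity $m$. The sequence stabilizes at $S_m$ after finitely many steps (bounded by $\g(S)-\g(S_m)$), and the uniqueness of $\Fb(S_i)$ at each step gives a unique directed path from $S$ to the root, so $G(\CaS(P)_m)$ is a tree.

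For the characterization of the children, suppose $S$ is a child of $T$ in $G(\CaS(P)_m)$ and set $a = \Fb(S)$, so that $T = S \cup \{a\}$. Exactly as in Theorem \ref{thr:tree}, one sees $a \in \msg(T)$ and $a \succ \Fb(T)$; the new ingredient is that $m \in S$ forces $a \neq m$. Conversely, given $a \in \msg(T) \setminus \{m\}$ with $a \succ \Fb(T)$ and $T \setminus \{a\} \in \CaS(P)_m$, the gaps of $T \setminus \{a\}$ form $\CaH(T) \cup \{a\}$, whose $\preceq$-maximum is $a$ (because $a \succ \Fb(T)$), so $\Fb(T \setminus \{a\}) = a$ and $(T \setminus \{a\}, T)$ is an edge.

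The only real obstacle beyond Theorem \ref{thr:tree} is making sure the multiplicity is stable under both operations in the tree. This reduces to the single observation that any element of $\CaS(P)_m$ is contained in $S_m$ and contains $m$, which pins down $\Fb$ strictly above $m$ in the ascent and rules out $a = m$ in the descent; all other steps are a direct adaptation of the earlier argument.
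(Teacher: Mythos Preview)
Your proof is correct and follows essentially the same approach as the paper, which simply states that the argument is analogous to that of Theorem~\ref{thr:tree} with the sequence $S_{i+1}=S_i\cup\{\Fb(S_i)\}$ restricted by the condition $\Fb(S_i)\succ m$. You have carefully spelled out the details the paper leaves implicit, in particular the verification that $S_m\in\CaS(P)_m$, the preservation of multiplicity along the ascending chain, and the exclusion $a\neq m$ in the description of the children.
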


\begin{proof}
The proof is analogous to that of Theorem~\ref{thr:tree}. The sequence obtained by adjoining the Frobenius element in each term, restricted by the condition $\Fb(S)\succ m$.
\end{proof}

When $\CaS(P)_m$ is finite, the recursive application of Theorem~\ref{thr:treemultiplicity} provides an algorithmic method to compute such a set.

\begin{example}
Let $P=\{1,2\}\subset \N$. Figure \ref{fig:finito} shows the elements of the set $\CaS(P)_3$.
\begin{figure}[ht]
\centering
\begin{tikzpicture}[scale=0.8]
\tikzset{level distance=5em, sibling distance=5em}
\Tree
[.$S_3=\langle 3,4,5\rangle$
        [.\shortstack{$R=\langle 3,5,7\rangle$ \\ $\Fb(R)=4$}
            [.\shortstack{$R'=\langle 3,5\rangle$ \\ $\Fb(R')=7$} ]
        ]
        [.\shortstack{$W=\langle 3,4\rangle$ \\ $\Fb(W)=5$} ]
]
\end{tikzpicture}
\caption{The set $\CaS(P)_3$}
\label{fig:finito}
\end{figure}
\end{example}

On the contrary, if $\CaS(P)_m$ is not finite, then it is necessary to restrict the computation by fixing the genus. Based on Proposition~\ref{prop:caracREMOVEelement} and Theorem~\ref{thr:treemultiplicity}, Algorithm~\ref{ComputeCaA(P)fixgenus} can be adapted to compute $\CaS(P)_m$ up to genus $g$ with the following modifications:
\begin{itemize}
    \item Line \ref{lineROOT} replace the root $\CaC$ by $S_m$.
    \item Line \ref{lineMSG} replace by $A \leftarrow \{x\in \msg(T)\setminus\{m\}\mid x\succ \Fb(T)\}$.
\end{itemize}

\begin{example}\label{example:treeGenus}
Let $\CaC$ be the non-negative integer cone given in Example \ref{example:treeGenus}. Consider $P=\{(1,4),(6,2),(7,2)\}$ and the graded lexicographical order. The tree given in Figure \ref{fig:treeMultiplicity} shows the elements in the set $\CaS(P)_{(4,1)}$ up to genus 3. In this example, the label of a non-root node corresponds to the minimal generator that has been removed.

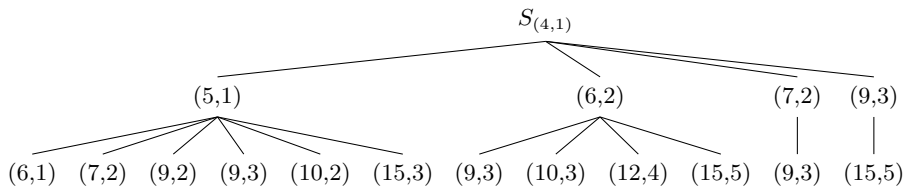
\begin{figure}[ht]
\resizebox{\textwidth}{!}{%
\begin{tikzpicture}
\tikzset{level distance=3em}
\small
\Tree
[.$S_{(4,1)}$
        [.(5,1)
            [.(6,1) ]
            [.(7,2) ]
            [.(9,2) ]
            [.(9,3) ]
            [.(10,2) ]
            [.(15,3) ]
        ]
        [.(6,2)
            [.(9,3) ]
            [.(10,3) ]
            [.(12,4) ]
            [.(15,5) ]
        ]
        [.(7,2)
            [.(9,3) ]
        ]
        [.(9,3)
            [.(15,5) ]
        ]
]

\end{tikzpicture}
}

\caption{The tree of the elements in $\CaS(P)_m$ up to genus 3}
\label{fig:treeMultiplicity}
\end{figure}
\end{example}

\subsection*{Funding}

The first and last two authors are partially supported by grant PID2022-138906NB-C21, funded by MICIU/AEI/10.13039/501100011033 and by ERDF/EU.

Consejería de Universidad, Investigación e Innovación de la Junta de Andalucía project ProyExcel\_00868 and research group FQM343 also partially supported the last three authors.

This publication and research have been partially funded by INDESS (Research University Institute for Sustainable Social Development), Universidad de Cádiz, Spain.

\subsection*{Acknowledgments}
We are grateful to Daniel Escánez-Expósito for his comments and notes.

\subsection*{Author information}

\noindent
J. I. Farr\'{a}n. Departamento de Matemática Aplicada, Universidad de Valladolid, Campus de Segovia.
Member of the IMUVa (Instituto de Matemáticas de la Universidad de Valladolid).
E-40005 Segovia (Segovia, Spain). E-mail: jifarran@uva.es.

\medskip

\noindent
J. C. Rosales. Departamento de \'{A}lgebra, Universidad de Granada, E-18071 Granada, (Granada, Spain).
E-mail: jrosales@ugr.es.

\medskip

\noindent
R. Tapia-Ramos. Departamento de Matemáticas, Universidad de Cádiz, E-11406 Jerez de la Frontera (Cádiz, Spain).
E-mail: raquel.tapia@uca.es.

\medskip

\noindent
A. Vigneron-Tenorio. Departamento de Matemáticas/INDESS (Instituto Universitario para el Desarrollo Social Sostenible), Universidad de C\'adiz, E-11406 Jerez de la Frontera (C\'{a}diz, Spain).
E-mail: alberto.vigneron@uca.es.

\subsection*{Data Availability}
The authors confirm that the data supporting some findings of this study are available within it.

\subsection*{Conflict of Interest}
The authors declare that they have no conflict of interest.


\begin{thebibliography}{20}

\bibitem{Bruns}
Bruns, W.; Gubeladze, J.,
\newblock Polytopes, rings, and K-theory.
\newblock {Springer Monographs in Mathematics.} Springer, Dordrecht, 2009.

\bibitem{CistoOrdinary}
Cisto, C.,
\newblock On some numerical semigroup transforms.
\newblock Algebra Colloquium  29 (2022), no. 03, 509--526.

\bibitem{Cox}
Cox, D. A.; Little, J.; O'Shea, D.,
\newblock Ideals, varieties, and algorithms. An introduction to computational algebraic geometry and commutative algebra.
\newblock Undergraduate Texts in Mathematics. Springer, Cham, 2015.

\bibitem{diaz2022characterizing}
Díaz-Ramírez, J.~D. ; García-García, J.~I.; Marín-Aragón, D.; Vigneron-Tenorio, A.,
\newblock Characterizing affine $\CaC$-semigroups.
\newblock Ricerche di Matematica 71 (2022), no. 1, 283--296.

\bibitem{GenSemNp}
Failla G.; Peterson C.; Utano, R.,
\newblock Algorithms and basic asymptotics for generalized numerical semigroups in $\N^p$,
\newblock Semigroup Forum (2016) 92, 460--473.

\bibitem{Csemigroup}
Garc\'{\i}a-Garc\'{\i}a J. I.; Mar\'{\i}n-Arag\'{o}n D.; Vigneron-Tenorio A.,
\newblock \emph{An extension of Wilf's conjecture to affine semigroups},
\newblock Semigroup Forum 96, Issue 2 (2018), 396--408.

\bibitem{Meduna}
Meduna, A.,
\newblock Automata and Languages: Theory and Applications.
\newblock Springer, London, 2012.

\bibitem{libroRosales}
Rosales, J. C.; Garc\'{\i}a-S\'anchez, P. A.,
\newblock Numerical Semigroups.
\newblock Dev. Math., 20 Springer, New York, 2009.

\bibitem{R-G-U-Cohen-Macauly}
Rosales, J. C.; Garc\'{\i}a-S\'anchez, P. A.; Urbano-Blanco, J. M.,
\newblock On Cohen-Macaulay subsemigroups of $\N^2$.
\newblock Communications in Algebra. 26 (1998), no. 8 2543--2558.

\bibitem{R-T-V_Asemigroup}
Rosales, J. C.; Tapia-Ramos, R.; Vigneron-Tenorio, A.,
\newblock Affine semigroups without consecutive small elements.
\url{https://arxiv.org/abs/2506.17152}

\bibitem{R-T-V-Ordinario}
Rosales, J. C.; Tapia-Ramos, R.; Vigneron-Tenorio, A.,
\newblock A computational approach to the study of finite-complement submonoids of an Affine Cone.
\newblock Results in Math. 80 (2025), no. 66.


\end{thebibliography}
\end{document}